\theoremstyle{plain}
\newtheorem{theorem}{Theorem}[subsection]
\newtheorem{lemma}[theorem]{Lemma}
\newtheorem*{claim*}{Claim}
\newtheorem*{theorem*}{Theorem}
\newtheorem*{lemma*}{Lemma}
\newtheorem*{observation*}{Observation}
\newtheorem*{corollary*}{Corollary}
\theoremstyle{definition}
\newtheorem*{definition*}{Definition}
\newtheorem*{problem*}{Problem}
\newtheorem*{problems*}{Problems}
\newtheorem*{fact*}{Fact}
\theoremstyle{remark}
\newtheorem*{example*}{Example}
\newtheorem*{remark*}{Remark}
\renewcommand{\Pr}{\mathsf{P}}
\newcommand{\EE}{\operatorname{\mathsf{E}}}
\renewcommand{\geq}{\geqslant}
\renewcommand{\leq}{\leqslant}
\newcommand{\diff}{\mathop{}\!\mathrm{d}}
\renewcommand{\epsilon}{\varepsilon}
\newcommand{\Binomial}{\mathrm{Bin}}
\newcommand{\Ind}{\mathbf{1}}
\renewcommand*{\thetheorem}{%
  \ifnum\value{subsection}=0 %
    \thesection
  \else
    \thesubsection
  \fi
  .\arabic{theorem}%
}
\begin{document}

\setlength{\parskip}{0.2cm}
\begin{center}
    \Large
    On the Random Minimum Spanning Subgraph Problem for Hypergraphs\\
    \vspace*{1cm}
    \large
    Nikita Zvonkov\footnote{The work was supported by the HSE University Basic Research Program.}\\
    \normalsize\textit{Faculty of Computer Science, HSE University}\\
    
\end{center}
    
\begin{abstract}
The weight of the minimum spanning tree in a complete weighted graph with random edge weights is a well-known problem. For various classes of distributions, it is proved that the weight of the minimum spanning tree tends to a constant, which can be calculated depending on the distribution. In this paper, we generalise this result to the hypergraphs setting.
\end{abstract}

\section{Introduction}
    Consider a complete graph $\mathfrak{G}_n$ on $n$ vertices. For any graph $G$, by $E(G)$ denote the set of its edges and by $V(G)$ denote the set of its vertices. Let $\mathfrak{G}_n$ be weighted (consider additional function $w:E(\mathfrak{G}_n)\to\mathbb{R}$, mapping any edge to its weight) with edge weights being iid\footnote{Here and later iid means \textit{independent and identically distributed}.} random variables distributed as $X, X>0$ a.s.\footnote{Here and later a.s. means \textit{almost surely} and denotes an event that occurs with probability of 1.} By $F_X(x)$ we denote the distribution function of $X$: $F_X(x)\overset{\textnormal{def}}{=}\Pr(X\leq x).$

By a \textit{spanning tree} of a graph $G$ denote a subgraph $T$ that is a tree which includes all of the vertices of $G$. \textit{Minimum spanning tree} is a spanning tree with minimal sum of its edge weights. Let MST$(G)$ denote this sum. As it was shown by A. Frieze \cite{F1985}, for some distributions of $X$ there is a limit value for MST$(\mathfrak{G}_n)$ as $n$ tends to infinity:

\begin{theorem}[A. Frieze]
    Consider $\mathfrak{G}_n$, whose edges are distributed as $X$, such that $\EE X^2<+\infty$, $F_X$ has a right derivative in 0 and $F'_X(0)=a,a\in(0,+\infty)$. Then, the following holds:
    \[\textnormal{MST}(\mathfrak{G}_n)\overset{\Pr}{\to}\frac{\zeta(3)}{a},\]
    where $\zeta(3)=\sum\limits_{k=1}^{+\infty}\frac{1}{k^3}\approx1,202056903\dots$.
\end{theorem}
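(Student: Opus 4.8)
The plan is to pin down $\EE\,\textnormal{MST}(\mathfrak{G}_n)$ well enough to see the limit $\zeta(3)/a$ and then to show $\textnormal{MST}(\mathfrak{G}_n)$ is concentrated. For $p\geq 0$, let $\mathfrak{G}_n(p)$ be the spanning subgraph of $\mathfrak{G}_n$ consisting of the edges of weight at most $p$, and let $\kappa_n(p)$ be its number of connected components. Kruskal's algorithm processes the edges in nondecreasing order of weight and takes an edge exactly when it merges two current components; these merges are precisely the (downward) jumps of the nonincreasing step function $p\mapsto\kappa_n(p)$, which runs from $\kappa_n(0)=n$ to $\kappa_n(\infty)=1$. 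By Fubini (integration by parts on a step function),
\[
\textnormal{MST}(\mathfrak{G}_n)=\int_0^\infty\bigl(\kappa_n(p)-1\bigr)\diff p .
\]
Writing $C_k(p)$ for the number of components of $\mathfrak{G}_n(p)$ on $k$ vertices and using $\sum_{k=1}^n C_k(p)=\kappa_n(p)$ and $\sum_{k=1}^n kC_k(p)=n$, we get $\kappa_n(p)-1=\sum_{k=1}^{n-1}\bigl(1-\tfrac kn\bigr)C_k(p)$, hence
\[
\EE\,\textnormal{MST}(\mathfrak{G}_n)=\sum_{k=1}^{n-1}\Bigl(1-\frac kn\Bigr)\int_0^\infty\EE\,C_k(p)\,\diff p .
\]

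Next, each edge of $\mathfrak{G}_n(p)$ is present independently with probability $F_X(p)$, so summing over the $\binom nk$ vertex $k$-sets and the $k^{k-2}$ labelled trees on such a set (with the tree contribution split off) gives
\[
\EE\,C_k(p)=\binom nk(1-F_X(p))^{k(n-k)}\Bigl(k^{k-2}F_X(p)^{k-1}(1-F_X(p))^{\binom k2-k+1}+O_k\bigl(F_X(p)^{k}\bigr)\Bigr),
\]
where the $O_k$-term collects components containing a cycle. Since $F_X(p)=ap\,(1+o(1))$ as $p\to0^+$, the integral concentrates at $p=\Theta(1/n)$; substituting $p=u/(an)$, using $\binom nk\sim n^k/k!$ and $(1-F_X(p))^{k(n-k)}\to e^{-ku}$, one gets, for each fixed $k$,
\[
\int_0^\infty\EE\,C_k(p)\,\diff p\;\longrightarrow\;\frac{k^{k-2}}{a\,k!}\int_0^\infty u^{k-1}e^{-ku}\,\diff u=\frac{k^{k-2}}{a\,k!}\cdot\frac{(k-1)!}{k^{k}}=\frac1{a\,k^{3}},
\]
the cyclic term contributing only $O(1/n)$.

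To pass the limit through the sum over $k$ one needs a summable majorant: a bound $\bigl(1-\tfrac kn\bigr)\int_0^\infty\EE\,C_k(p)\,\diff p\leq g(k)$ valid for all $n$ with $\sum_k g(k)<\infty$. This is exactly where the hypothesis $\EE X^2<\infty$ is used (it also gives $\EE X<\infty$): it allows one to truncate the $p$-integral at a level past which $\mathfrak{G}_n(p)$ is connected with probability $1-o(1)$ and to bound the remaining tail, while a crude first-moment/union bound handles the terms with large $k$. Granting this,
\[
\EE\,\textnormal{MST}(\mathfrak{G}_n)\longrightarrow\sum_{k=1}^\infty\frac1{a\,k^{3}}=\frac{\zeta(3)}{a}.
\]

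It remains to upgrade convergence of the mean to convergence in probability, i.e.\ to show $\DD\,\textnormal{MST}(\mathfrak{G}_n)\to0$ and invoke Chebyshev's inequality. Two features help: with probability $1-o(1)$ every edge of the minimum spanning tree has weight $O(\log n/n)$ (at most the connectivity level $p^*_n$ of the process $\mathfrak{G}_n(\cdot)$), and resampling the weight of a single edge changes $\textnormal{MST}(\mathfrak{G}_n)$ by at most $p^*_n$, and changes it at all only for an edge that lies in the minimum spanning tree before or after the resampling --- of which there are at most $2(n-1)$. Plugging this into an Efron--Stein (bounded-differences) estimate --- carefully, since the size of the perturbation is not independent of the relevant events, and using $\EE X^2<\infty$ once more to absorb atypically large weights --- gives $\DD\,\textnormal{MST}(\mathfrak{G}_n)=o(1)$; alternatively one can run a direct second-moment computation on the fields $C_k(p)$, whose occurrences on disjoint vertex sets are nearly independent. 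I expect this concentration step to be the main obstacle: the integral identity and the first-moment asymptotics are essentially bookkeeping, whereas controlling how far the minimum spanning tree can shift under a single weight change --- sharply enough to beat the $\binom n2$ edges with only two moments of $X$ in hand --- is where the real effort lies.
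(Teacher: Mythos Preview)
The paper does not prove this statement: it is quoted from Frieze's 1985 paper and used only as motivation for the hypergraph generalisation (Theorem~\ref{th:main}). There is therefore no ``paper's own proof'' to compare your proposal against.

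That said, your outline is essentially Frieze's original argument: the identity $\textnormal{MST}(\mathfrak{G}_n)=\int_0^\infty(\kappa_n(p)-1)\,\diff p$, the expansion of $\EE C_k(p)$ via labelled trees, the change of variables $p=u/(an)$ yielding $\sum_k 1/(ak^3)=\zeta(3)/a$, and a second-moment/variance argument for concentration. Your identification of the two genuine technical points --- a summable majorant to justify interchanging limit and sum (this is where $\EE X^2<\infty$ enters) and the variance bound --- is accurate, and these are exactly the places where Frieze does the real work.

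It is perhaps worth noting that the present paper, when it proves its hypergraph analogue, does \emph{not} follow this component-counting route. Instead it sandwiches $\textnormal{MST}(\tilde{\mathfrak{G}}_n)$ between the outputs of two Kruskal-type greedy algorithms, analyses the expected per-step contribution via the giant-component phase transition in $G_t(n,m)$, and uses Azuma--Hoeffding for concentration. That method gives only upper and lower bounds (with a $\log t$ gap) rather than an exact constant, precisely because the hypergraph MST is not captured by a single clean integral identity of the kind you wrote down for $t=2$.
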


Let us define the generalisation on the MST problem for hypergraphs. Consider a complete $t$-uniform (that is, every edge contains $t$ different vertices) weighted hypergraph $\tilde{\mathfrak{G}}_n$ on $n$ vertices with edge weights being iid random variables distributed as $X, X>0$ a.s. Firstly, we call a hypergraph connected if by substituting all of its edges with cliques on the same set of vertices, we get a connectd graph. Similarly, by \textit{minimum spanning subgraph} of a hypergraph $\tilde{G}$ we denote connected subgraph $H$ including all the vertices of $\tilde{G}$ with minimal sum of edge weights. For convenience, denote this weight by MST$(\tilde{G})$. In this paper we establish bounds on MST$(\tilde{\mathfrak{G}}_n)$ with random edge weights.

\begin{theorem}\label{th:main}
    Consider a complete $t$-uniform hypergraph $\tilde{\mathfrak{G}}_n$ with iid edge weights distributed as $X, X>0$ a.s. Then if $\lim\limits_{x\to 0+} \frac{x}{\sqrt[t-1]{F_X(x)}}= a,a\in(0,+\infty)$, there are constants $L_t$ and $U_t$ such that for any $\varepsilon>0$, 
\[\Pr\left(\textnormal{MST}(\tilde{\mathfrak{G}}_n)\in(L_t/a-\varepsilon, U_t/a+\varepsilon)\right)\to1.\]
These constants are, correspondingly,
\[L_t=\sqrt[t-1]{c(t-1)!}\frac{t-1}{t^2}\int\limits_0^1\left(\frac{\ln(1-x)}{\left(1-x\right)^{t-1}-1}\right)^{t/(t-1)}\left(1-(1-x)^{t-1}\right)\diff x,\]
\[U_t=\sqrt[t-1]{c(t-1)!}\frac{t-1}{t}\int\limits_0^1\left(\frac{\ln(1-x)}{\left(1-x\right)^{t-1}-1}\right)^{t/(t-1)}x^{t-1}\diff x.\]
The gap between the bounds has the order of magnitude $\displaystyle\frac{U_t}{L_t}=O(\log t)$.
\end{theorem}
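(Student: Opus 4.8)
The plan is to pass from $\textnormal{MST}(\tilde{\mathfrak G}_n)$ to the component structure of a sparse random $t$-uniform hypergraph, evaluate the resulting limiting integral, and read off the two constants. I would begin with the hypergraph analogue of Kruskal's algorithm: process the $\binom nt$ hyperedges in increasing weight order and accept each one precisely when it decreases the number of connected components of the clique expansion. The accepted subhypergraph $H^{\mathrm{g}}$ is connected and spanning, so $\textnormal{MST}(\tilde{\mathfrak G}_n)\le\sum_{e\in H^{\mathrm{g}}}w(e)$, and at every threshold $p$ its accepted edges of weight $\le p$ induce the same components as $\tilde{\mathfrak G}_n(p)$, the subhypergraph of all edges of weight $\le p$. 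Writing $\kappa(p)$ for the number of components of $\tilde{\mathfrak G}_n(p)$ and using that an accepted edge merges between $2$ and $t$ components --- while in an \emph{optimal} connected spanning subhypergraph the edges of weight $>p$ still needed to merge $\kappa(p)$ pieces number at least $(\kappa(p)-1)/(t-1)$, since each reduces the component count by at most $t-1$ --- one obtains
\[
\frac{1}{t-1}\int_0^\infty\bigl(\kappa(p)-1\bigr)\diff p\;\le\;\textnormal{MST}(\tilde{\mathfrak G}_n)\;\le\;\int_0^\infty\bigl(\kappa(p)-1\bigr)\diff p .
\]
The upper bound here will turn out to give exactly $U_t$; the lower bound must be sharpened (for $t=2$ it is already tight, recovering Frieze's theorem, but for $t\ge3$ the bare factor $t-1$ is wasteful), and the honest improvement --- recording how few cheap $t$-sets can straddle $t$ distinct components once a giant component has formed --- is what produces both the prefactor $\tfrac{t-1}{t^2}$ and the weight $1-(1-x)^{t-1}$ in $L_t$.

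Next I would localise. A vertex lies in $\binom{n-1}{t-1}F_X(p)\sim n^{t-1}F_X(p)/(t-1)!$ hyperedges of weight $\le p$, so $\kappa(p)-1$ drops from $\approx n$ to $0$ over weights of order $n^{-1}$; on that window the hypothesis pins $F_X$ down up to the constant $a$, which then factors out of the whole computation and supplies the $1/a$ in $L_t/a,\,U_t/a$. A monotone coupling (more edges $\Rightarrow$ fewer components), together with the estimate that $\tilde{\mathfrak G}_n(p)$ is connected w.h.p.\ once $F_X(p)\gg\log n/n^{t-1}$ (which still occurs at $p=o(1)$), lets me replace $F_X$ by its normalised form at negligible cost and cut off the tail. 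Introducing the expected-degree parameter $\lambda$ via $F_X(p)=(t-1)!\,\lambda/n^{t-1}$ --- so $\diff p\propto\lambda^{(2-t)/(t-1)}\diff\lambda$, the proportionality constant carrying the factor $\sqrt[t-1]{c(t-1)!}$ --- and using $\kappa(p)/n\overset{\Pr}{\to}\rho(\lambda)$, the component density of the limiting random $t$-uniform hypergraph of intensity $\lambda$, reduces the problem to the single integral $\int_0^\infty\rho(\lambda)\,\lambda^{(2-t)/(t-1)}\diff\lambda$ (the interchange of limit and integral being licensed by the connectivity tail bound, which dominates the integrand uniformly in $n$).

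The density $\rho$ is then computed by the exploration argument. From a vertex, the exploration process has an independent $\Po(\lambda)$ number of hyperedges at each node, each spawning $t-1$ new nodes, so the extinction probability $\eta=\eta(\lambda)$ is the relevant root of $\eta=e^{-\lambda(1-\eta^{t-1})}$, with $\eta\equiv1$ below criticality $\lambda_c=1/(t-1)$; a Lagrange-inversion evaluation of $\EE\bigl[|C(v)|^{-1}\Ind(|C(v)|<\infty)\bigr]=\int_0^1\phi(z)\,z^{-1}\diff z$, where $\phi(z)=z\,e^{-\lambda(1-\phi(z)^{t-1})}$, yields the closed form $\rho(\lambda)=\eta-\frac{(t-1)\lambda}{t}\,\eta^{t}$ (which for $t=2$ is $1-\lambda/2$ on $[0,1]$, as in the Erd\H{o}s--R\'enyi model). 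Changing variables to the survival probability $x=1-\eta\in(0,1)$, for which $\lambda=\lambda(x)=\dfrac{\ln(1-x)}{(1-x)^{t-1}-1}$, turns the integral --- in its exact and in its lower-bounded forms --- into the expressions defining $U_t$ and $L_t$. The gap estimate is then a routine asymptotic analysis: as $t\to\infty$, $\lambda(x)\to-\ln(1-x)$, the $U_t$-integral concentrates near $x=1$ owing to the factor $x^{t-1}$ and is $\Theta(\log t/t)$, the $L_t$-integral (with $1-(1-x)^{t-1}\to1$) tends to a positive constant, and the prefactors differ by a factor $t$, so $U_t/L_t=\Theta(\log t)$.

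The main obstacle is the sharp form of the reduction: for $t\ge3$ one must establish $\kappa(p)/n\overset{\Pr}{\to}\rho(\lambda)$ with enough uniformity over the scales $p$ to integrate it (an edge-exposure martingale with McDiarmid's inequality should give concentration at each fixed $\lambda$, but stitching the scales together and controlling the near-critical window is delicate), and, more importantly, one must make rigorous the combinatorial accounting behind the improved lower bound --- how few cheap hyperedges any optimal subhypergraph can afford to use at each scale --- since this is exactly what separates $U_t/L_t=O(\log t)$ from the trivial $O(t)$. The coupling, the change of variables, and the $\zeta(3)$-style evaluation of the integrals are routine by comparison.
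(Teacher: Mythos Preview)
Your reduction to the component count $\kappa(p)$ is clean, but the identification of the two sides with the stated $U_t$ and $L_t$ is inverted. The integral $\int_0^\infty(\kappa(p)-1)\diff p$ equals $\sum_i w_i(K_i-1)$, where $K_i$ is the number of components merged by the $i$-th cheapest edge; this is precisely the quantity the paper calls $\mathcal{A}_2$ --- the MST of the clique-expanded multigraph $G''$ --- and uses, \emph{after} division by $t-1$, as its lower bound. So your ``weak'' lower bound $\tfrac{1}{t-1}\int(\kappa-1)\diff p$ is already exactly the paper's $L_t$; no sharpening is required, and the weight $1-(1-x)^{t-1}$ in $L_t$ arises not from any refined accounting of how many components a cheap hyperedge straddles but simply from integrating the per-edge factor $f_2(\beta)=t(1-\beta)-(1-\beta)^t$ (the limiting value of $\EE(K_i-1)$) by parts against $c^{1/(t-1)}$.

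The genuine gap is on the upper side. Your upper bound $\int(\kappa-1)\diff p=\mathcal{A}_2$ overshoots the greedy weight $\sum_{e\in H^{\mathrm g}}w(e)=\sum_i w_i\Ind(K_i>1)$ whenever $t\geq3$, because an accepted hyperedge may merge up to $t$ components yet is paid for only once. It is this smaller quantity --- the paper's $\mathcal{A}_1$ --- that converges to $U_t$: the relevant per-edge factor is $\Pr(K_i>1)\to1-\beta^t$, and integration by parts turns $1-\beta^t$ into the weight $x^{t-1}$ appearing in $U_t$. Your component-density route via $\rho(\lambda)=\eta-\tfrac{(t-1)\lambda}{t}\eta^t$ cannot recover $\mathcal{A}_1$, since $\mathcal{A}_1$ is not a functional of $\kappa(p)$ alone. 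The paper in fact avoids $\rho$ entirely: because small components have total size $o(n)$, both $\Pr(K_i>1)$ and $\EE(K_i-1)$ are determined to first order by the single parameter $\beta(c)$ (the giant-component fraction), which is what Lemma~\ref{lem:k} computes and Lemma~\ref{lem:a} then integrates and concentrates via Azuma--Hoeffding, with Lemma~\ref{lem:b} handling the tail.
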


We prove this theorem by constructing two algorithms, one of which returns a value not less than the actual MST and the other one makes a lower bound.
By $\mathcal{A}_1,\mathcal{A}_2$ we denote these algorithms. $\mathcal{A}_i: \mathcal{G}\to\mathbb{R}$, where $\mathcal{G}=\mathbb{R}_+^{N}$ is the set of possible edge weights and $N=\binom{n}{t}$ is the number of edges in $\tilde{\mathfrak{G}}_n$.

Consider a random hypergraph process $(G_i)_{i=0}^N=G_0\subsetneq G_1\subsetneq\dots\subsetneq G_N$, where $G_0$ is a hypergraph without any edges and $G_i$ is obtained from $G_{i-1}$ by drawing additional uniform edge $e_i$, which is not present in $G_i$.

Both the algorithms are greedy in a sense that for any $i\in\{1,2\}$, $\mathcal{A}_i(G)=\sum\limits_{i=1}^{N}\alpha_i(G_{i-1}, e_i)$ for some functions $\alpha_i$, where $G_0\subsetneq G_1\subsetneq\dots\subsetneq G_N$ is the denoted process with non-descending order of edge weights (that is, $w(e_1)\leq w(e_2)\leq\dots\leq w(e_N))$. For both algorithms, we find the asymptotic value of $\EE\alpha_i(G_{i-1}, e_i)$ and then establish $\sum\limits_{i=1}^{cn}\alpha_i(G_{i-1}, e_i)$ for $c\in\mathbb{R}_+$. Further, we show that $\sum\limits_{i=1}^{cn}\alpha_i(G_{i-1}, e_i)$ tends to the result value of the respective algorithms as $c\to\infty$ and by that finish the proof.

\section{The algorithms}
Recall the random $t$-uniform hypergraph process with ascending order of edge weights \[G_0\subsetneq G_1\subsetneq\dots\subsetneq G_N,N=\binom{n}{t}.\]
By $G_t(n, m)$ we denote $G_m$. If $t=2$, we omit the index: $G(n,m):=G_2(n,m)$.

Let $w_i$ denote $w(e_i)$.
Standard implementation of Kruskal algorithm (see \cite{kruskal1956shortest}) implies \[\alpha_1(G_{i-1},e_i):=w_i\cdot\Ind(e_i\textnormal{ connects different components in } G_{i-1})\footnote{For any event $A$, by $\Ind(A)$ we denote a random variable which equals 1 if $A$ occurs and 0 otherwise.}.\]
If we will perform this exact algorithm on a $\tilde{\mathfrak{G}}_n$, the result value will be the sum of the weights of all the edges $e_i$ that connect different components in their respective hypergraphs $G_{i-1}$. These edges form a connected hypergraph and hence $\sum \alpha_1(G_{i-1},e_i)\geq \textnormal{MST}(\tilde{\mathfrak{G}}_n)$ (by definition the minimum spanning subgraph is not greater in sum of weights than any other).

Now consider $E'$ --- the set of edges of the MST($\tilde{\mathfrak{G}}_n$). By replacing every edge $e_i$ with any tree on the same set of vertices with edge weights $w_i$ we obtain a connected weighted multigraph $G'$ with MST not greater than $(t-1)\textnormal{MST}(\tilde{\mathfrak{G}}_n)$
Consider a multigraph $G''$ obtained via replacing every edge of $\tilde{\mathfrak{G}}_n$ $e_i$ with a \textit{clique} on the same set of vertices with edge weights $w_i$. $G'$ is a subgraph of $G''$, hence
\[\textnormal{MST}(G'')\leq \textnormal{MST}(G')\leq(t-1)\textnormal{MST}(\tilde{\mathfrak{G}}_n).\]

Note that $\textnormal{MST}(G'')$ can be calculated by a greedy algorithm on $\tilde{\mathfrak{G}}_n$ with $\alpha_2(G_{i-1}, e_i)=w_i\cdot (K_i-1)$, where $K_i$ is the number of components that $e_i$ connects in $G_{i-1}$.

\section{Algorithms value on prefixes}
For a sequence of events $\{\xi_i\}_{i=1}^{+\infty}$, we say that $\xi_i$ occurs \textit{w.h.p.} (with high probability) if $\lim\limits_{i\to\infty}\Pr[\xi_i]=1$.

Consider the binomial model of a random $t$-uniform hypergraph $G_t(n,p)$ (that is, every edge is drawn independently with probability $p$) with $\displaystyle\frac{\binom{n}{t}p}{n}\to c$. As it was shown in \cite{Behrisch2009}, if $c<1/(t(t-1))$ then w.h.p. all the components of $G_t(n,p)$ are of the size $O(\log n)$. On the other hand, if $c>1/(t(t-1))$ then w.h.p. all the components but one are of size $O(\log n)$ with the biggest one (also known as the \textit{giant component}) being of size $S$, $S/n\overset{\Pr}{\to}\beta\in(0,1)$. It was also shown than $\beta$ satisfies
\begin{equation}\label{eq:beta}
    c=\frac{\ln(1-\beta)}{t\left((1-\beta)^{t-1}-1\right)}.
\end{equation}

We can easily extend a slightly weaker version of these results to the uniform model $G_t(n, m)$: fix $G_t(n, p)$ such that $\binom{n}{t}p = cn - \sqrt{n}\log n$. W.h.p. the number of edges of this hypergraph is between $cn - 2\sqrt{n}\log n$ and $cn$ edges and its components suffice the conditions above, which means that we can draw at most $2\sqrt{n}\log n$ additional edges to achieve $G_t(n, m)$ with asymptotically similar size of giant component (if present) and sizes of smaller components at most $O(\sqrt{n}\log^2 n)$. Let's call a component \textit{small} if it isn't the giant component of the graph.

\begin{lemma}\label{lem:k}
Fix $c\in \mathbb{R}_+\textbackslash\{1/(t(t-1))\}$ and let $i=\lceil cn\rceil$. For the first algorithm, 
\[\frac{\EE\alpha_1(G_{i-1}, e_i)}{\EE w_i}\to1-\beta^t(c).
\]
For the second algorithm, 
\[\frac{\EE\alpha_2(G_{i-1}, e_i)}{\EE w_i}\to(1-\beta(c))t-(1-\beta^t(c)),\]
where $\beta(c)$ is equal to 0 if $c<1/t(t-1)$ and the unique solution of (\ref{eq:beta}) otherwise.
\end{lemma}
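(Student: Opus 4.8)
The plan is to decouple the value $w_i$ from the underlying combinatorics, then feed in the component structure recalled above, and finish with two linearity-of-expectation estimates. For the decoupling: since the process lists the edges in non-descending order of weight, $G_{i-1}$ is distributed as the uniform random $t$-uniform hypergraph $G_t(n,i-1)$, the edge $e_i$ is uniform among the $\binom nt-(i-1)$ non-edges of $G_{i-1}$, and — the crucial point — $w_i$, being the $i$-th order statistic of the $\binom nt$ iid edge weights, is independent of the pair $(G_{i-1},e_i)$, because for an iid sample the vector of order statistics is independent of the ranks (the permutation that sorts the sample). Hence
\[\EE\alpha_1(G_{i-1},e_i)=\EE w_i\cdot\Pr\big(e_i\text{ meets }\geq 2\text{ components of }G_{i-1}\big),\]
\[\EE\alpha_2(G_{i-1},e_i)=\EE w_i\cdot\EE(K_i-1),\]
so the lemma reduces to proving that $\Pr(e_i\text{ meets }\geq 2\text{ components})\to1-\beta^t(c)$ and that $\EE(K_i-1)$ tends to the asserted limit.

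Next I would invoke the structural description stated just before the lemma, applied with $m:=i-1=\lceil cn\rceil-1\sim cn$: if $c>1/(t(t-1))$ then w.h.p.\ $G_{i-1}$ has a unique giant component, of size $S=(\beta(c)+o(1))n$, and all other components of size $O(\sqrt n\log^2 n)$; if $c<1/(t(t-1))$ then w.h.p.\ every component of $G_{i-1}$ has size $O(\log n)$, and $\beta(c)=0$. I would condition on $G_{i-1}$: on this ``good'' event (probability $1-o(1)$) the estimates below hold uniformly, and on its complement the quantities involved are bounded — by $1$ for $\alpha_1$, and by $t-1$ for $\alpha_2$ since $K_i\leq t$ always — so the complement contributes only $o(1)$ to each expectation.

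Now the two computations, on a fixed good $G_{i-1}$ with small-component sizes $s_1,s_2,\dots$, so that $\sum_j s_j=n-S\sim(1-\beta(c))n$ and, crucially, $\max_j s_j=o(n)$. For $\alpha_1$: $1-\Pr(e_i\text{ meets }\geq 2\text{ components}\mid G_{i-1})$ is the probability that all $t$ vertices of $e_i$ land in one component, and, $e_i$ being uniform among $\binom nt-m=\binom nt(1-o(1))$ non-edges, this equals $(1+o(1))\big(\binom St+\sum_j\binom{s_j}t\big)/\binom nt$; here $\binom St/\binom nt\to\beta^t(c)$, while $\sum_j\binom{s_j}t\leq(\max_j s_j)^{t-1}\sum_j s_j=o(n^{t-1})\cdot n=o(n^t)$, so the expression tends to $1-\beta^t(c)$. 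For $\alpha_2$: writing $K_i=\sum_C\Ind(C\cap e_i\neq\emptyset)$ over the components $C$ of $G_{i-1}$, and using that the total number of edges is $O(n)=o(\binom nt)$ so that the non-edge restriction costs only a $1+o(1)$ factor, one gets $\EE(K_i\mid G_{i-1})=(1+o(1))\sum_C\big(1-\binom{n-|C|}t/\binom nt\big)$; the giant contributes $1-\binom{n-S}t/\binom nt\to1-(1-\beta(c))^t$, while a small component of size $s_j$ contributes $\tfrac{t s_j}{n}+O(s_j^2/n^2)$, so the small components jointly contribute $\tfrac tn\sum_j s_j+O\big(\sum_j s_j^2/n^2\big)\to t(1-\beta(c))$, using $\sum_j s_j^2\leq(\max_j s_j)\sum_j s_j=o(n^2)$. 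Subtracting $1$ and averaging the (uniform) $o(1)$'s against the conditioning gives the asserted limit for $\EE(K_i-1)$.

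The one genuinely delicate step is this aggregate estimate over the small components: each of them individually carries a vanishing probability, yet there can be $\Theta(n)$ of them, so the argument really needs the quantitative bound $\max_j s_j=o(n)$ — here $O(\sqrt n\log^2 n)$ — to force $\sum_j s_j^2=o(n^2)$ and $\sum_j s_j^t=o(n^t)$, which is exactly what the transfer from $G_t(n,p)$ to $G_t(n,m)$ sketched just before the lemma supplies. The remaining points are routine: checking that the $o(1)$ errors are uniform over the good event so they survive the conditioning, that the non-edge restriction on $e_i$ is negligible since $m=o(\binom nt)$, and — should $X$ have atoms — breaking ties arbitrarily, which leaves intact the independence of $w_i$ from the sorting permutation.
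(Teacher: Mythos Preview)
Your argument is correct and follows essentially the same line as the paper's: first decouple $w_i$ from the combinatorial data $(G_{i-1},e_i)$ via the independence of order statistics and ranks, then feed in the giant/small component structure of $G_t(n,m)$ and use that the small components have size $o(n)$ so that two vertices of $e_i$ land in the same small component only with probability $o(1)$. The paper computes $\Pr(K_i=k)$ by tracking the (asymptotically binomial) number of vertices of $e_i$ that fall into the giant, whereas you compute $\EE K_i$ directly from the identity $K_i=\sum_C\Ind(C\cap e_i\neq\emptyset)$ and linearity; this is a minor technical variant, and if anything a bit cleaner, since it avoids the edge case at $K_i=t$ (where both ``one vertex in the giant'' and ``no vertex in the giant'' contribute). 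One point worth flagging: your computation yields $\EE(K_i-1)\to t(1-\beta)-(1-\beta)^t$, which is exactly the function $f_2$ the paper uses downstream, so the displayed limit $(1-\beta)t-(1-\beta^t)$ in the lemma statement appears to be a typo rather than a discrepancy with your argument.
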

\begin{proof}
    Let $K_i$ denote the number of components that $e_i$ connects in $G_{i-1}$. Then $\alpha_1(G_{i-1}, e_i)=w_i\cdot \Pr[K_i>1]$ and $\alpha_2(G_{i-1}, e_i)=w_i\cdot(K_i-1)$. Let $e$ be the uniformly random edge of $\tilde{\mathfrak{G}}_n$, then for any fixed $e_1,e_2,\dots,e_{i-1}$ $\sup\limits_{A\subset2^{E(\tilde{\mathfrak{G}}_n)}}|\Pr(e\in A)-\Pr(e_i\in A)|<\frac{ct!}{n^{t-1}}$. Let $K'_i$ denote the number of components that $e$ connects in $G_{i-1}$. Note that since w.h.p. all the small components in $G_{i-1}$ are of size $O(\sqrt{n}\log^2 n)$, probability that there are at least two vertices of $e$ that lie in the same small component is at most $O(\log^4 n/n)$, which means that
    \[\Big|\Pr[K'_i=k]-\Pr[\textnormal{exactly $t-k+1$ vertices of $e$ lie in the giant component}]\Big|=o(1)\]
    for any $k>1$ and
    \[\Big|\Pr[K'_i=1]-\Pr[\textnormal{at least $t-1$ vertices of $e$ lie in the giant component}]\Big|=o(1).\]

    The number of vertices of $e$ in the giant component converges to $\Binomial(n, \beta(c))$ by distribution, the same holds for $e_i$. This number is independent with $w_i$, which finishes the proof.
\end{proof}

Using Azuma-Hoeffding inequality \cite{A1967}, one can easily prove that for any $\varepsilon>0$ w.h.p. the following holds:

\begin{lemma}\label{lem:a}
    Fix $0<c_0<c_1<\dots<c_k$. By $\mathfrak{a}^i_j$ denote $\EE\alpha_i(G_{\lceil c_jn\rceil-1}, e_{\lceil c_jn\rceil})$. Then for any $\varepsilon>0, \ell\in\{1,2\}$, w.h.p. the following holds:
    \[\sum_{i=\lceil c_jn\rceil+1}^{\lceil c_{j+1}n\rceil}\alpha_\ell(G_{i-1}, e_i)\in\left((1-\varepsilon)(c_{j+1}-c_j)n\frac{\mathfrak{a}^\ell_{j+1}\EE w_{c_jn}}{\EE w_{c_{j+1}n}}, (1+\varepsilon)(c_{j+1}-c_j)n\frac{\mathfrak{a}^\ell_j\EE w_{c_{j+1}n}}{\EE w_{c_jn}}\right).\]
\end{lemma}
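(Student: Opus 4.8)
The plan is to prove Lemma~\ref{lem:a} by viewing each partial sum $\sum_{i=\lceil c_jn\rceil+1}^{\lceil c_{j+1}n\rceil}\alpha_\ell(G_{i-1},e_i)$ as the terminal value of a Doob martingale and applying the Azuma--Hoeffding concentration inequality, after first pinning down the expectation via Lemma~\ref{lem:k}. First I would observe that the quantity $\mathfrak{a}^\ell_j/\EE w_{c_jn}$ is, by Lemma~\ref{lem:k}, a continuous function of $c_j$ (namely $1-\beta^t(c_j)$ for $\ell=1$, and $(1-\beta(c_j))t-(1-\beta^t(c_j))$ for $\ell=2$; continuity of $\beta$ away from the threshold is immediate from~(\ref{eq:beta})). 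Hence along the block $i\in(\lceil c_jn\rceil,\lceil c_{j+1}n\rceil]$, using that $\EE w_i$ is nondecreasing in $i$ (weights are sorted) so that $\EE w_{c_jn}\le \EE w_i\le \EE w_{c_{j+1}n}$, we get for each such $i$ the two-sided estimate $\mathfrak{a}^\ell_{j+1}\dfrac{\EE w_{c_jn}}{\EE w_{c_{j+1}n}}\le \EE\alpha_\ell(G_{i-1},e_i)\le \mathfrak{a}^\ell_j\dfrac{\EE w_{c_{j+1}n}}{\EE w_{c_jn}}$, up to a multiplicative $1+o(1)$ coming from the convergence in Lemma~\ref{lem:k} and from $\lceil\cdot\rceil$ rounding. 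Summing over the $(c_{j+1}-c_j)n+O(1)$ indices in the block gives that the \emph{expectation} of the block sum already lies in the claimed interval, after absorbing the $o(1)$ and $O(1/n)$ errors into the $\varepsilon$ for $n$ large.

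Next I would set up the martingale. Fix the block and let $\xi_1,\dots,\xi_m$ (with $m=\lceil c_{j+1}n\rceil-\lceil c_jn\rceil$) be the edges $e_{\lceil c_jn\rceil+1},\dots,e_{\lceil c_{j+1}n\rceil}$ revealed in order, and let $S=\sum_{i}\alpha_\ell(G_{i-1},e_i)$ restricted to the block. Define $Z_r=\EE[S\mid \mathcal F_r]$ where $\mathcal F_r$ is the $\sigma$-algebra generated by the first $\lceil c_jn\rceil+r$ edges of the sorted process together with all the edge weights; then $Z_0=\EE S$ and $Z_m=S$, and $(Z_r)$ is a Doob martingale. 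The bounded-difference bound is the routine but essential check: revealing one more edge $e$ of the process changes, in any one step, the component that $e$ merges into, which can shift the eventual value of $S$ by at most $O(\text{(component size)}\cdot w_{\max\text{ in block}})$. By the size control carried over from \cite{Behrisch2009} (all small components $O(\sqrt n\log^2 n)$, giant component $\Theta(n)$), and by noting that changing a single edge can only affect $\alpha$-contributions of $O(1)$ subsequent edges whose endpoints were in the affected component — or, more robustly, by a swapping/exposure martingale on the random permutation of weights where exchanging two edges changes the total by $O(w_{\max}\cdot(\text{small-component size or 1}))$ — one gets a per-step difference bound $c_r$ with $\sum_r c_r^2 = O\big(n \cdot (\sqrt n\log^2 n)^2\cdot (\EE w_{c_{j+1}n})^2\big)$ or better; the precise exponent does not matter, only that $\sum_r c_r^2 = o\big((n\,\EE w_{c_{j+1}n})^2\big)$, which holds since the block sum is $\Theta(n\,\EE w_{c_{j+1}n})$. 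Azuma--Hoeffding then yields $\Pr\big(|S-\EE S|>\varepsilon n\,\EE w_{c_{j+1}n}\big)\le 2\exp(-\Omega(\varepsilon^2 n/\mathrm{polylog}\,n))\to0$.

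Combining the two parts: with probability $1-o(1)$ simultaneously $S$ is within an additive $\varepsilon n\,\EE w_{c_{j+1}n}$ of $\EE S$, and $\EE S$ lies in the interval whose endpoints are $(1\mp\varepsilon')(c_{j+1}-c_j)n\,\mathfrak a^\ell_{j+1}\EE w_{c_jn}/\EE w_{c_{j+1}n}$ and $(1\pm\varepsilon')(c_{j+1}-c_j)n\,\mathfrak a^\ell_j\EE w_{c_{j+1}n}/\EE w_{c_jn}$; since both bounds are $\Theta(n\,\EE w_{c_{j+1}n})$ (here one uses $\EE w_{c_jn}/\EE w_{c_{j+1}n}=\Theta(1)$, which follows from the hypothesis $x/\sqrt[t-1]{F_X(x)}\to a$ that forces $\EE w_{cn}=\Theta(n^{-1/(t-1)})$ with a continuous $c$-dependent constant), the additive slack can be re-expressed as a multiplicative $(1\pm O(\varepsilon))$ factor, and a relabelling $\varepsilon\mapsto\varepsilon/C$ finishes the claim. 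I expect the main obstacle to be making the bounded-difference estimate genuinely rigorous: one must argue carefully that perturbing a single edge (or transposing two weights) in the sorted process has a bounded \emph{global} effect on $S$, not just a local one, because a reordering could in principle cascade; the clean way is to work on the probability space of a uniform random assignment of the $N$ weights to the $N$ edge-slots, observe that a single transposition changes at most two of the $\alpha_\ell$-summands and each by at most $O(w_{\max}\cdot t)$ plus a lower-order term from the at most $O(\sqrt n\log^2 n)$-sized small component whose merge history changes, and then invoke the Azuma inequality for functions of random permutations with bounded transposition-differences.
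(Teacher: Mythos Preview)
Your overall strategy --- Azuma--Hoeffding on a Doob martingale built from the block edges --- is exactly what the paper does, but the paper routes it through a simplification you miss, and without it your bounded-difference step does not go through as written. The paper does \emph{not} apply Azuma to the weighted sum $S=\sum_i w_i\Ind^\ell_i$; it first factors out the weights by monotonicity (every $w_i$ in the block lies in $[w_{\lceil c_jn\rceil},w_{\lceil c_{j+1}n\rceil}]$, and these two order statistics are pinned down to $(1+o(1))\EE w_{\lceil c_j n\rceil}$, $(1+o(1))\EE w_{\lceil c_{j+1} n\rceil}$ by a Kolmogorov-type lemma), and then applies Azuma only to the integer-valued $F=\sum_i\Ind^\ell_i$. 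After replacing the block edges by i.i.d.\ uniform edges (collisions have probability $o(1)$), changing a single edge alters $F$ by at most $t-1$, which gives the concentration immediately; the sandwich on $\EE F$ comes from monotonicity of $i\mapsto\EE\Ind^\ell_i$, not from Lemma~\ref{lem:k}.

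Two of your proposed bounded-difference estimates are actually incorrect. First, your bound $\sum_r c_r^2=O\big(n(\sqrt n\log^2 n)^2(\EE w_{c_{j+1}n})^2\big)$ is of the \emph{same} order as $(n\,\EE w_{c_{j+1}n})^2$ up to polylogarithmic factors, so Azuma would not yield $o(1)$ deviation probability at the scale you need. Second, the claim that ``a single transposition changes at most two of the $\alpha_\ell$-summands'' is false: swapping the edges at positions $p<p'$ alters every intermediate graph $G_p,\dots,G_{p'-1}$, so $K_i$ (and hence $\Ind^\ell_i$) can change for every $i\in[p,p']$. One can rescue a bound $|S-S'|\le O(t)\,w_{\lceil c_{j+1}n\rceil}$ via Abel summation, using that the partial sums of $K_i-K_i'$ stay in $[-(t-1),t-1]$; but this is precisely what the paper's decoupling of weights from indicators gives for free, and you have not supplied that argument.
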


Using this fact one can deduce that for any $c>1/(t(t-1))$ and $i\in\{1,2\}$,
\[\EE\sum_{i=1}^{\lceil cn\rceil}\alpha_\ell(G_{i-1}, e_i)\to a\sqrt[t-1]{t!}\left(\int\limits_{0}^{\frac{1}{t(t-1)}}\sqrt[t-1]{x}\diff x+\int\limits_{\frac{1}{t(t-1)}}^c\sqrt[t-1]{x} \cdot f_\ell(\beta(x))\diff x\right),\]
where $f_1(x)=1-x^t$ for the first algorithm and $f_2(x)=t(1-x)-(1-x)^t$ for the second one (since the sum on the left is transformed to the Darboux sum for the integral in the right part using Lemma \ref{lem:a}) Proof of Lemma \ref{lem:a} is given in the appendix.

\section{Completion of the proof}

Define $A_c=\sum\limits_{i=1}^{\lceil cn\rceil}\alpha(G_{i-1}, e_i)$. Since we established the limit value of $A_c$ as $n$ tends to infinity, we now need to prove that $\lim\limits_{c\to\infty}A_c=A(G)$. First we note that w.h.p. $\sum\limits_{i=1}^{2n\ln n}\alpha(G_{i-1}, e_i)=A(G)$ since a random hypergraph with such many drawn edges in connected w.h.p., as it was shown by Poole \cite{Poole2015}.

\begin{lemma}\label{lem:b}
    Let $C(G)$ denote the number of connectivity components in $G$. Then for any $m\in[0, 2n\ln n]$, the following holds:
    \[\Pr\left[C(G_t(n,m))>n\cdot f(m/n)+n^{4/5}\right]=O(n^{-1/6}),\]
    where $f$ is an exponentially decreasing function, that is $f(x)=O\left(\exp\left(-Cx\right)\right)$ for some $C>0$.
\end{lemma}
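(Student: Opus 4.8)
The plan is to bound $\EE[C(G_t(n,m))]$ above by $n\,f(m/n)$ for an explicit exponentially decreasing $f$, and then to show that $C(G_t(n,m))$ concentrates sharply about its mean; the additive $n^{4/5}$ will absorb both the deviation and the total number of large components. Write $c := m/n$, fix $L := \lceil 2 n^{1/5}\rceil$, and for a hypergraph $G$ let $C_{\le L}(G)$ be the number of its components of size at most $L$. A hypergraph on $n$ vertices has at most $n/L$ components of size exceeding $L$, so
\[C(G_t(n,m)) \le C_{\le L}(G_t(n,m)) + n/L \le C_{\le L}(G_t(n,m)) + \tfrac{1}{2}n^{4/5};\]
hence it suffices to control $C_{\le L}$, which --- and this is the point of the truncation --- never sees the giant component, so densities below and above $1/(t(t-1))$ are handled uniformly.

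\textbf{First moment.} We have $\EE[C_{\le L}(G_t(n,m))] = \sum_{k=1}^{L}\binom{n}{k}\Pr[\,S\text{ is a component}\,]$ for a fixed $k$-set $S$. The event ``$S$ is a component'' is the intersection of two events on \emph{disjoint} (hence independent) edge sets: no edge meets both $S$ and its complement, and the hypergraph induced on $S$ is connected. With $p := m/\binom{n}{t}$ and $M_{n,k} := \binom{n}{t}-\binom{k}{t}-\binom{n-k}{t}$ crossing edges, the first probability is $\le e^{-pM_{n,k}}$, and an elementary estimate gives $pM_{n,k}\ge(1-o(1))\,ctk$ uniformly for $k\le L$. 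Since a connected $t$-uniform hypergraph on $k$ vertices has at least $(k-1)/(t-1)$ edges, the second probability is $\le(1+o(1))\,\tau_t(k)\,p^{(k-1)/(t-1)}$, where $\tau_t(k)$ counts the connected spanning sub-hypergraphs with that many edges (a generalisation of Cayley's formula; $\tau_2(k)=k^{k-2}$). Because $p\asymp t!\,c\,n^{-(t-1)}$, the power of $n$ cancels and the $k$-th summand is at most $n$ times
\[\frac{\tau_t(k)}{k!}\,(c\,t!)^{(k-1)/(t-1)}\,e^{-(1-o(1))ctk},\]
which, by Stirling, equals $r_t(c)^k$ times a summable factor (of order $k^{-5/2}$ when $t=2$), where $r_t(c):=e\,(c\,t!)^{1/(t-1)}e^{-ct}$. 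One checks $r_t(c)\le 1$ for all $c>0$, with equality only at the critical density $c=1/(t(t-1))$, where the $k^{-5/2}$-type tail still makes the series converge. Summing over $k$ gives $\EE[C_{\le L}(G_t(n,m))]\le n\,f(c)+O(n^{2/5}\log n)$ for an explicit $f$, which admits a closed form in the survival probability $\beta(c)$ of (\ref{eq:beta}) --- for $t=2$, $f(c)=(1-\beta(c))-c(1-\beta(c))^2$ --- with $f(0)=1$ and $f(c)\ge e^{-ct}$ (the $k=1$ term, i.e.\ the density of isolated vertices), so $f$ is exponentially decreasing and in particular $f(c)=O(e^{-c})$ uniformly on $[0,2\ln n]$.

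\textbf{Concentration.} Realise $G_t(n,m)$ as the first $m$ edges of a uniformly random ordering $e_1,e_2,\dots$ of the $\binom{n}{t}$ potential edges and run the Doob martingale $Y_i := \EE[\,C_{\le L}(G_t(n,m))\mid e_1,\dots,e_i\,]$, $0\le i\le m$. Changing one $e_i$ alters the edge set of $G_t(n,m)$ in at most one edge, and adding or removing a single edge changes $C_{\le L}$ by at most $t$ (it merges at most $t$ small components), so the martingale increments are bounded by $2t$; Azuma--Hoeffding \cite{A1967} gives
\[\Pr\big[\,C_{\le L}(G_t(n,m)) > \EE[C_{\le L}(G_t(n,m))] + \lambda\,\big] \le \exp\!\Big(-\tfrac{\lambda^2}{8mt^2}\Big).\]
With $\lambda := \tfrac{1}{4}n^{4/5}$ and $m\le 2n\ln n$ the exponent has order $n^{3/5}/\ln n$, so this probability is $n^{-\omega(1)}$, comfortably $O(n^{-1/6})$ (the exponents $4/5$ and $1/6$ in the statement are not optimised). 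Combining the three displays, with probability $1-O(n^{-1/6})$,
\[C(G_t(n,m)) \le \big(n\,f(c)+O(n^{2/5}\log n)\big) + \tfrac{1}{4}n^{4/5} + \tfrac{1}{2}n^{4/5} \le n\,f(m/n) + n^{4/5},\]
which is the lemma. (For $m$ near $2n\ln n$ the martingale is unnecessary: there $\EE[C_{\le L}]=o(1)$ and Markov already gives $\Pr[C_{\le L}\ge 1]=O(n^{-1/6})$, matching the fact \cite{Poole2015} that $G_t(n,m)$ is then connected w.h.p.)

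\textbf{Main difficulty.} The martingale step is routine once one works in the process model, and the truncation at $L$ is free. The real work is the first-moment estimate \emph{uniformly} in $c=m/n$ over $[0,2\ln n]$ and in $k\le L$: establishing $r_t(c)\le 1$ everywhere (a phase-transition inequality, borderline exactly at $c=1/(t(t-1))$, where the $k^{-5/2}$-type tail must be shown to compensate); identifying the generalised Cayley count $\tau_t(k)$ and the resulting closed form for $f$; and checking that the $o(1)$ corrections --- in $pM_{n,k}$, in $p$ versus $m/\binom{n}{t}$, and in the hypertree bound --- are all $O(n^{-3/5}\log n)$, hence negligible, simultaneously for every admissible $k$ and $c$.
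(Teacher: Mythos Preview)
Your approach is quite different from the paper's. The paper does not compute a hypergraph first moment at all; instead it couples $G_t(n,m)$ to an ordinary random graph by taking from each hyperedge $e^{(i)}=(e^{(i)}_1,\dots,e^{(i)}_t)$ the random pair $(e^{(i)}_1,e^{(i)}_2)$, and after discarding collisions and thinning obtains a copy of $G(n,m/2)$ all of whose edges are sub-edges of the original hyperedges. Hence $C(G_t(n,m))\le C(G(n,m/2))$, and the lemma follows by citing Frieze's bound for the graph case together with the exponential decay of $1-\beta$ for the giant-component fraction $\beta$ in $G(n,cn/2)$. This sidesteps all hypergraph combinatorics, at the price of a looser and less explicit $f$.

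Your strategy --- truncate at size $L$, first moment, Azuma --- is sound and would yield a sharper $f$, but the first-moment step contains a concrete error. The claimed rate $r_t(c)=e\,(ct!)^{1/(t-1)}e^{-ct}$ is wrong for $t\ge 3$: at the critical density $c=1/(t(t-1))$ it equals $((t-2)!)^{1/(t-1)}e^{(t-2)/(t-1)}>1$ (e.g.\ $r_3(1/6)=e^{1/2}$), so your series would diverge precisely where you need the $k^{-5/2}$ tail to save it. The mistake is in the asymptotics of $\tau_t(k)/k!$: the labelled $t$-uniform hypertree count on $k=1+(t-1)\ell$ vertices is $\tau_t(k)=\dfrac{(k-1)!\,k^{\ell-1}}{((t-1)!)^{\ell}\,\ell!}$, and redoing the Stirling step gives the correct per-\emph{edge} rate $\rho_t(c)=e\,ct(t-1)\,e^{-ct(t-1)}$, which does satisfy $\rho_t\le 1$ with equality exactly at criticality (so the $k^{-5/2}$ argument then goes through). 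Two further points are glossed over: sizes $k\not\equiv 1\pmod{t-1}$ admit no hypertrees, so the bound ``connectivity probability $\le\tau_t(k)p^{(k-1)/(t-1)}$'' is vacuous there and you must instead use the minimum edge count $\lceil(k-1)/(t-1)\rceil$ (the extra factor of $p$ only helps, but it has to be said); and your isolation and connectivity probabilities are written for the binomial model, whereas the lemma is stated for the uniform model $G_t(n,m)$, so the transfer needs a line of justification.
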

Proof of Lemma \ref{lem:b} is given in the appendix.

Let $C(n, c)$ denote $C(G_t(n, cn))$. Lemma \ref{lem:b} implies that for any $\varepsilon>0$, w.h.p.
\begin{multline*}
    \sum\limits_{i=\lceil cn\rceil}^{2n\ln n}\alpha(G_{i-1}, e_i)\leq(1+\varepsilon)\sum\limits_{i=\lceil c\rceil}^{2\ln n} a\frac{\sqrt[t-1]{it!}}{n}\left(C(n, i-1)-C(n, i)\right)=a(1+\varepsilon)\cdot\\
    \cdot\left(\frac{\sqrt[t-1]{(c-1)t!}}{n}C(n, c-1)-\frac{\sqrt[t-1]{2\ln nt!}}{n}C(n, 2 \ln n)+\sum_{i=\lceil c\rceil}^{2\ln n - 1}C(n, i)\cdot(\sqrt[t-1]{i+1}-\sqrt[t-1]{i})\frac{\sqrt[t-1]{t!}}{n}\right)\leq\\
    \leq a(1+\varepsilon)\cdot\left(\underbrace{\sqrt[t-1]{(c-1)t!}f(c-1)}_{O(c\cdot f(c))}+\underbrace{\sum_{i=\lceil c\rceil}^{2\ln n - 1}f(i)\cdot(\sqrt[t-1]{(i+1)t!}-\sqrt[t-1]{it!})}_{O(f(c))}+n^{4/5}\cdot O(\ln^2n/n)\right)=\\
    =O(c\cdot\exp(-Cc))+O(\ln^2n/n^{1/5}).
\end{multline*}

This completes the proof of Theorem \ref{th:main}, as we now have obtained two bounds, which can be reformulated as it is suggested in the introduction. 

\bibliographystyle{plainurl}
\bibliography{bibl}

\begin{thebibliography}{1}

\bibitem{ref1}
{\em Kolmogorov--Smirnov Test}, pages 283--287.
\newblock Springer New York, New York, NY, 2008.
\newblock \href {https://doi.org/10.1007/978-0-387-32833-1_214}
  {\path{doi:10.1007/978-0-387-32833-1_214}}.

\bibitem{A1967}
Kazuoki Azuma.
\newblock {Weighted sums of certain dependent random variables}.
\newblock {\em Tohoku Mathematical Journal}, 19(3):357 -- 367, 1967.
\newblock \href {https://doi.org/10.2748/tmj/1178243286}
  {\path{doi:10.2748/tmj/1178243286}}.

\bibitem{Behrisch2009}
Michael Behrisch, Amin Coja‐Oghlan, and Mihyun Kang.
\newblock The order of the giant component of random hypergraphs.
\newblock {\em Random Structures \&amp; Algorithms}, 36(2):149–184, August
  2009.
\newblock URL: \url{http://dx.doi.org/10.1002/rsa.20282}, \href
  {https://doi.org/10.1002/rsa.20282} {\path{doi:10.1002/rsa.20282}}.

\bibitem{FK2015}
Alan Frieze and Michał Karoński.
\newblock {\em Introduction to Random Graphs}.
\newblock Cambridge University Press, 2015.
\newblock \href {https://doi.org/10.1017/CBO9781316339831}
  {\path{doi:10.1017/CBO9781316339831}}.

\bibitem{F1985}
A.M. Frieze.
\newblock On the value of a random minimum spanning tree problem.
\newblock {\em Discrete Applied Mathematics}, 10(1):47--56, 1985.
\newblock URL:
  \url{https://www.sciencedirect.com/science/article/pii/0166218X85900587},
  \href {https://doi.org/https://doi.org/10.1016/0166-218X(85)90058-7}
  {\path{doi:https://doi.org/10.1016/0166-218X(85)90058-7}}.

\bibitem{kruskal1956shortest}
Joseph~B Kruskal.
\newblock On the shortest spanning subtree of a graph and the traveling
  salesman problem.
\newblock {\em Proceedings of the American Mathematical society}, 7(1):48--50,
  1956.

\bibitem{Poole2015}
Daniel Poole.
\newblock On the strength of connectedness of a random hypergraph.
\newblock {\em The Electronic Journal of Combinatorics}, 22(1), March 2015.
\newblock URL: \url{http://dx.doi.org/10.37236/4666}, \href
  {https://doi.org/10.37236/4666} {\path{doi:10.37236/4666}}.

\end{thebibliography}
\appendix
\newpage

\section*{Proof of Lemma \ref{lem:a}}

Recall $K_i$ from the proof of Lemma \ref{lem:k}. It is easy to see that $\EE K_i$ is non-increasing, as well as $\Pr[K_i>1]$. Let $\alpha_\ell(G_{i-1}, e_i)=w_i\cdot\Ind^\ell_i$, where $w_i$ is the weight of $e_i$ and $\Ind^\ell_i$ is some non-decreasing function of $K_i$ ($\Ind^1_i = \min(K_i, 1), \Ind^2_i = K_i$). Since $w_i$ is non-decreasing, the same holds for $\EE w_i$, whereas $\EE \Ind^\ell_i$ is non-increasing.

Consider $F(e_{\lceil c_jn\rceil+1},\dots,e_{\lceil c_{j+1}n\rceil})=\sum\limits_{i=\lceil c_jn\rceil+1}^{\lceil c_{j+1}n\rceil}\Ind^\ell_i$. For $\lceil c_{j+1}n\rceil-\lceil c_jn\rceil$ iid edges, uniformly distributed over $E(\tilde{\mathfrak{G}}_n)$, w.h.p none of them coincide with each other or with any of $\lceil c_jn\rceil$ edges which are already drawn, which means that we can substitute $e_{\lceil c_jn\rceil+1},\dots,e_{\lceil c_{j+1}n\rceil}$ with iid uniformly distributed edges. Changing only one variable can't result in more than $t-1$ change in the value of $F$, and therefore by Azuma-Hoeffding inequality \cite{A1967} for any $\varepsilon>0$ w.h.p. $|F-\EE F|<\varepsilon n$. On the other hand by Kolmogorov lemma \cite{ref1} for every $\varepsilon>0$ w.h.p. $\left|w_{c_j}-a\frac{\sqrt[t-1]{c_jt!}}{n}\right|<\varepsilon/n$. We can derive the following bound on $\EE F$:
\[(c_{j-1}-c_j)n\EE\Ind^\ell_{c_j}\leq\EE F\leq(c_{j-1}-c_j)n\EE\Ind^\ell_{c_{j-1}}.\]
Therefore, for every $\varepsilon>0$ w.h.p. 
\[\sum\limits_{i=\lceil c_jn\rceil+1}^{\lceil c_{j+1}n\rceil}\alpha_\ell(G_{i-1}, e_i)\leq(c_{j-1}-c_j)n\EE\Ind^\ell_{\lceil c_{j-1} n\rceil}\cdot\EE w_{\lceil c_jn\rceil}\cdot(1+\varepsilon).\]

\section*{Proof of Lemma \ref{lem:b}}
    
Independently draw $m\leq2n\ln n$ edges $e^{(i)}=(e^{(i)}_1,\dots,e^{(i)}_t)$ in graph $G_t$ on $n$ vertices (assume that $(e^{(i)}_1,\dots,e^{(i)}_t)$ are randomly shuffled, so that every permutation is equally probable). With probability $1-O(\ln^2n/n^{t-2})$ no two drawn edges coincide. Then construct a graph $G$ on $n$ vertices by doing the following: for every drawn (hyper-) edge $e^{(i)}$ let's draw an edge $e^i=(e^{(i)}_1, e^{(i)}_2)$. Assume that after deleting the duplicates we are left with $E_0$ edges in $G$, then for large $n$
\[\EE E_0=\binom{n}{2}\cdot\left(1-\left(1-\frac{1}{\binom{n}{2}}\right)^m\right)\geq\binom{n}{2}\cdot\left(1-\exp\left(-m/\binom{n}{2}\right)\right)\geq m-\frac{m^2}{\binom{n}{2}}.\]

Given that $E_0\leq m$ a.s., we derive that $\Pr(E_0<m/2)=O(\ln n/n)$. If $E_0\geq m/2$, randomly delete some of the edges to get to exactly $m/2$ edges.

At the moment, we managed to get a graph $G(n, m/2)$, the edges of which are subedges of the original hypergraph. It follows that $C(G(n, m/2))\geq C(G_t(n, m))$. As it was shown by Frieze, there is some $f$, such that for any $n$ and $m\leq 2n\ln n$ the following holds:
\[\Pr(C(G(n, m/2))>n\cdot f(m/2n)+n^{4/5})=O(n^{-1/6}).\]

Let $\beta(c)$ denote the limit of fraction of vertices in a giant component in a $G(n, cn/2)$ as $n$ tends to $+\infty$. As it shown in \cite{FK2015}, the recursive equation $\beta(c)+e^{-c\beta(c)}=1$ holds for $c>1$. Note that $\beta(c)+f(c/2)\leq 1$ because $n\cdot \beta(c)(1+o(1))$ vertices lie in the same component in a graph $G(n, cn/2)$, and there are also $n\cdot f(c/2)(1+o(1))$ other components, each of which contain at least one vertex, and the amount of vertices is at most $n\geq n(\beta(c)+f(c/2))$. From the equation for $\beta$, one can see that $1-\beta$ is exponentially decreasing and so is $f$ since $f(x)\leq 1-\beta(2x)$.
Therefore $C(G_t(n,m))\leq C(G_t(n, m/2)\leq 1-\beta(m/n)$ with probability $1-O(n^{-1/6})$ --- by that, Lemma \ref{lem:b} is proved.
\end{document}